\documentclass[11pt]{amsart}

\usepackage[margin=1in]{geometry}
\usepackage{amsmath,amssymb,amsthm}

\newtheorem{theorem}{Theorem}
\newtheorem{proposition}[theorem]{Proposition}
\newtheorem{lemma}[theorem]{Lemma}

\theoremstyle{remark}
\newtheorem{remark}[theorem]{Remark}

\DeclareMathOperator{\Sp}{Sp}
\DeclareMathOperator{\SL}{SL}
\DeclareMathOperator{\tr}{tr}

\begin{document}

\title[Degree-\(2\) Siegel theta series of extremal even unimodular lattices]
{On the degree-\(2\) Siegel theta series of extremal even unimodular lattices of ranks \(48\), \(72\), \(96\), and \(120\)}

\author[Scott Duke Kominers]{Scott Duke Kominers}
\address{Harvard Business School; Department of Economics and Center of Mathematical Sciences and Applications, Harvard University; and a16z crypto}
\email{kominers@fas.harvard.edu}
\thanks{I used LLMs to assist with computations in the preparation of this article, particularly GPT-5.4/5 Pro and Claude~4.6/7 Opus (both accessed via Poe with the support of Quora, where I am an advisor). I especially appreciate helpful comments from Manabu Oura and a thorough review from Refine.ink.
The problem, methods, and eventual written form are my own; and of course any errors remain my responsibility. This work was conducted while I was visiting the Technological Innovation, Entrepreneurship, and Strategic Management (TIES) Group at the MIT Sloan School of Management; I greatly appreciate their hospitality.}

\subjclass[2020]{Primary 11F46; Secondary 11H06, 11F50}
\keywords{Even unimodular lattice, extremal lattice, Siegel theta series, Fourier--Jacobi expansion, Igusa structure theorem, Jacobi forms}

\begin{quote}
\begin{center}
\emph{The first of a pair of papers in memory of Professor Michio Ozeki.}
\end{center}
\end{quote}

\begin{abstract}
We study degree-\(2\) Siegel theta series of extremal even unimodular lattices from the genus-\(2\) viewpoint initiated by Ozeki. Using Igusa's structure theorem, we define a depth filtration on genus-\(2\) cusp forms, measured by the total degree in \(\chi_{10}\) and \(\chi_{12}\), and relate it to the vanishing of low Fourier--Jacobi coefficients forced by extremality. In ranks \(48\), \(72\), and \(96\), this interaction closes exactly and yields a direct genus-\(2\) proof that the degree-\(2\) theta series is uniquely determined by extremality (conditional on existence in rank~\(96\)).

In rank \(120\) (again conditional on existence), the same argument leaves a one-dimensional residual line spanned by \(\chi_{10}^6\): with \(\chi_{10}\) in the standard integral normalization, any two such degree-\(2\) theta series differ by an integer multiple of \(\chi_{10}^6\).
\end{abstract}

\maketitle

\section{Introduction}

Let \(L\) be an even unimodular lattice of rank \(n\). By the
Mallows--Odlyzko--Sloane bound \cite{MOS},
\[
\min(L) \le 2\left\lfloor \frac{n}{24} \right\rfloor + 2;
\]
if equality holds, then \(L\) is called \emph{extremal}.

For a positive integer \(g\), the degree-\(g\) Siegel theta series of \(L\) is
\[
\Theta_L^{(g)}(Z)
=
\sum_{(x_1,\dots,x_g) \in L^g}
\exp\left(\pi i\,\tr\bigl(((x_i,x_j))\,Z\bigr)\right),
\qquad Z \in \mathbb H_g.
\]
When \(L\) is even unimodular of rank \(n\), the series \(\Theta_L^{(2)}\) is a Siegel modular form of degree \(2\) and weight \(\frac{n}{2}\) on \(\Sp_4(\mathbb Z)\). The Fourier coefficients of \(\Theta_L^{(g)}\) encode the numbers of \(g\)-tuples of lattice vectors with prescribed Gram matrix, and hence record refined geometric information about the configuration of vectors in \(L\).

A central theme in the theory of extremal lattices is that extremality forces unexpectedly rigid behavior in associated automorphic forms. In genus \(2\), this perspective goes back in an essential way to the work of Ozeki, who emphasized that the structure of Siegel modular forms of degree~\(2\), together with the geometric restrictions imposed by extremality, can be used to study theta series in a direct and conceptual way; see, in particular, \cite{OzekiRelation76,OzekiBasis76,OzekiProperty77,Ozeki40,OzekiSM,OzekiLeech,Ozeki48Num}. The present paper follows that viewpoint.

The basic question considered here is: \textit{to what extent is the degree-\(2\) theta series of an extremal even unimodular lattice determined solely by the extremality condition?} In small ranks, the answer is either immediate or already classical. For ranks \(8\) and \(16\), the associated spaces of Siegel modular forms are one-dimensional, so every even unimodular lattice of those ranks must have the same degree-\(2\) theta series \textit{a priori}. In rank \(24\), the unique extremal lattice is the Leech lattice. In rank \(32\), Erokhin \cite{Erokhin} proved the stronger statement that equality of the degree-\(1\) theta series already forces equality of the Siegel theta series through degree \(3\). Peters \cite{Peters} later proved uniqueness of the degree-\(2\) theta series for extremal even unimodular lattices of ranks \(32\), \(48\), and \(56\), together with the analogous conditional uniqueness statement in ranks \(72\) and \(96\); see also \cite{OuraOzeki32Num,Ozeki48Num} for related computations.

By contrast, degree-\(2\) uniqueness fails in rank \(40\). Ozeki \cite{Ozeki40} constructed extremal even unimodular lattices of rank \(40\) with distinct degree-\(2\) theta series, and Peters \cite{Peters} subsequently showed that distinct extremal even unimodular lattices can have the same degree-\(2\) theta series in that rank. Thus the genus-\(2\) rigidity phenomenon is both subtle and strongly rank-dependent.

There are also stronger higher-genus results in certain cases. Salvati Manni \cite{SM} proved degree-\(3\) uniqueness in ranks \(32\) and \(48\), and obtained one-dimensional residual statements in ranks \(56\) and \(72\), where the difference of two degree-\(3\) theta series must be a scalar multiple of \(\chi_{28}\) and \(\chi_{36}\), respectively; see also Ozeki~\cite{OzekiSM,OuraOzeki32Deg4}. As Ozeki observed \cite{OzekiSM}, for rank~\(72\), the degree-\(2\) uniqueness statement can be derived from Salvati Manni's~\cite{SM} degree-\(3\) results by applying the Siegel \(\Phi\)-operator.

Following Ozeki's genus-\(2\) framework, we study \(\Theta_L^{(2)}\) through the structure of the ring of degree-\(2\) Siegel modular forms and the vanishing of low Fourier--Jacobi coefficients forced by the minimum of an extremal lattice. The main observation is that Igusa's generators naturally induce a depth filtration on genus-\(2\) cusp forms, measured by the total degree in \(\chi_{10}\) and \(\chi_{12}\). In weights \(24\), \(36\), and \(48\)---corresponding to ranks \(48\), \(72\), and \(96\), respectively---the vanishing of the first \(t\) positive-index Fourier--Jacobi coefficients forced by extremality exactly matches the maximal possible depth, and so Ozeki's genus-\(2\) strategy yields uniqueness directly.

In weight \(60\), corresponding to rank \(120\), the same depth argument no longer closes completely---it leaves a one-dimensional residual space spanned by \(\chi_{10}^6\). With the standard integral normalization of \(\chi_{10}\), the residual scalar is in fact integral, so the difference of the degree-\(2\) theta series of any two extremal even unimodular lattices of rank \(120\) must be an integer multiple of \(\chi_{10}^6\).

Our main result is thus the following.

\begin{theorem}\label{thm:main}
Let \(L\) and \(L'\) be extremal even unimodular lattices of rank \(24t\). Then:
\begin{enumerate}
\item[(i)] For \(t \in \{2,3,4\}\),
\[
\Theta_L^{(2)} = \Theta_{L'}^{(2)}.
\]
\item[(ii)] For \(t = 5\),
\[
\Theta_L^{(2)} - \Theta_{L'}^{(2)} = c(L,L')\,\chi_{10}^6
\]
for some integer \(c(L,L') \in \mathbb Z\).
\end{enumerate}
Equivalently, the degree-\(2\) Siegel theta series is unique in ranks \(48\) and \(72\); there is at most one degree-\(2\) Siegel theta series arising from an extremal even unimodular lattice of rank \(96\); and any two degree-\(2\) Siegel theta series arising from extremal even unimodular lattices of rank \(120\) differ by an integer multiple of \(\chi_{10}^6\).
\end{theorem}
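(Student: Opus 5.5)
Write \(F=\Theta_L^{(2)}-\Theta_{L'}^{(2)}\), a Siegel modular form of weight \(k=12t\) on \(\Sp_4(\mathbb Z)\). The plan is to show that \(F\) lies in a very small subspace by combining two pieces of information: the vanishing of low Fourier--Jacobi coefficients forced by extremality, and the bound on depth that Igusa's structure theorem imposes in weight \(12t\).

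First, \(\Phi F=\Theta_L^{(1)}-\Theta_{L'}^{(1)}=0\). This holds because the degree-\(1\) theta series of an extremal lattice is uniquely determined: it is congruent to \(1\) modulo \(q^{2t+2}\) in a space of dimension \(t+1\). Hence \(F\) is a cusp form. Expand
\[
F(Z)=\sum_{m\ge 1}\phi_m(\tau,z)\,e^{2\pi i m\tau'},\qquad Z=\begin{pmatrix}\tau&z\\ z&\tau'\end{pmatrix}.
\]
Since \(\min(L)=\min(L')=2t+2\), every Fourier coefficient \(a(T)\) of either theta series with \(T=\begin{pmatrix}n&r/2\\ r/2&m\end{pmatrix}\neq 0\) vanishes unless both diagonal entries are \(0\) or at least \(t+1\). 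The entry \(m\) must also be nonzero, so \(m\ge t+1\) for every contributing \(T\). It follows that \(\phi_1=\dots=\phi_t=0\).

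Next, I use Igusa's theorem. The ring of even-weight forms is \(\mathbb C[\psi_4,\psi_6,\chi_{10},\chi_{12}]\), and cusp forms are exactly the ideal \((\chi_{10},\chi_{12})\). Every monomial \(\psi_4^a\psi_6^b\chi_{10}^c\chi_{12}^d\) has first nonzero Fourier--Jacobi coefficient in index \(c+d\), because \(\chi_{10}\) and \(\chi_{12}\) start at index \(1\) with \(\phi_{10,1},\phi_{12,1}\). I define depth as the total degree \(c+d\) and filter cusp forms by depth \(\ge j\). The key lemma is: if \(F\) is a cusp form with \(\phi_1=\dots=\phi_{j-1}=0\), then \(F\) lies in the depth-\(\ge j\) part.

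I would prove this by induction on \(j\). Write \(F=\sum_{c+d=j-1}\chi_{10}^c\chi_{12}^d G_{c,d}+(\text{depth}\ge j)\), where the \(G_{c,d}\in\mathbb C[\psi_4,\psi_6]\). The index-\((j-1)\) coefficient is then
\[
\sum_{c+d=j-1}\phi_{10,1}^c\,\phi_{12,1}^d\;\Phi(G_{c,d}),
\]
where \(\Phi(G_{c,d})\in\mathbb C[E_4,E_6]\). The needed fact is that \(\phi_{10,1}\) and \(\phi_{12,1}\) are algebraically independent over \(M_*(\SL_2(\mathbb Z))\): every weak Jacobi form of index \(N\) is uniquely a polynomial in \(\phi_{-2,1},\phi_{0,1}\) over \(M_*\), and \(\phi_{10,1}=\eta^{24}\phi_{-2,1}\), \(\phi_{12,1}\) is a combination of \(\eta^{24}\phi_{0,1}\) and \(E_4E_6\phi_{10,1}/\Delta\)-type terms. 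This makes the index-\((j-1)\) map injective, so all \(G_{c,d}\) vanish. I expect this independence/injectivity step to be the main obstacle, and would handle it through the explicit formulas for \(\phi_{10,1},\phi_{12,1}\) in terms of \(\phi_{-2,1},\phi_{0,1}\).

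Applying the lemma with \(j=t+1\), \(F\) has depth \(\ge t+1\) in weight \(12t\). A monomial of depth \(c+d\ge t+1\) has weight \(\ge 10(t+1)\), so I need \(10(t+1)+4a+6b+2d=12t\). For \(t=2,3,4\) we have \(10(t+1)>12t\) or fail parity/positivity: weight \(24<30\), \(36<40\), and \(48<50\). So \(F=0\), which gives (i). For \(t=5\), the equation \(10\cdot 6\le 60\) forces \(c=6\), \(d=a=b=0\), so \(F=c\,\chi_{10}^6\). Integrality follows by comparing one Fourier coefficient. The leading coefficient of \(\chi_{10}^6\) at \(T=\tfrac12\begin{pmatrix}2&1\\1&2\end{pmatrix}\cdot 6\)-type index, i.e. the first coefficient of \(\phi_{10,1}^6\) (which is \(\pm1\) in the integral normalization, coming from \(\eta^{144}\phi_{-2,1}^6\)), equals \(c\) times that unit. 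Meanwhile \(F\) has integer coefficients, since theta coefficients count vectors. Hence \(c\in\mathbb Z\), which gives (ii).
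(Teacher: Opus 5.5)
Your proposal is correct. Its architecture is the paper's: cuspidality of the difference via the degree-$1$ theta series, vanishing of $\phi_1,\dots,\phi_t$ from $\min(L)=2t+2$, the Igusa depth filtration with induction on depth through the leading Fourier--Jacobi coefficient, the weight count ($10(t+1)>12t$ for $t\le 4$, and only $\chi_{10}^6$ survives for $t=5$), and integrality read off at $T=\bigl(\begin{smallmatrix}6&3\\3&6\end{smallmatrix}\bigr)$.

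The one genuine divergence is the independence step. This is the claim that $\sum_{c+d=N} g_{c,d}\,\psi_{10}^c\psi_{12}^d=0$ with $g_{c,d}\in M_*(\SL_2(\mathbb Z))$ forces every $g_{c,d}=0$. The paper proves it by hand. It fixes $\tau$ in an open set where $\psi_{10}(\tau,\cdot)$ and $\psi_{12}(\tau,\cdot)$ are non-proportional, which it checks from the leading $q$-terms at $z=\tfrac12,\tfrac13$. It then uses the linear independence of $A^aB^{d-a}$. You instead cite the Eichler--Zagier theorem that even-weight weak Jacobi forms form the free algebra $M_*[\phi_{-2,1},\phi_{0,1}]$. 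That gives algebraic independence over $M_*$ in one stroke, at the cost of importing a structure theorem where the paper needs only first $q$-coefficients.

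Your stated relation for $\phi_{12,1}$ is garbled, however: $E_4E_6\phi_{10,1}/\Delta$ has weight $8$, not $12$. In fact $\phi_{12,1}=\Delta\,\phi_{0,1}$ exactly; this is how Eichler--Zagier define $\phi_{0,1}$. Hence $\sum g_{c,d}\phi_{10,1}^c\phi_{12,1}^d=\Delta^N\sum g_{c,d}\phi_{-2,1}^c\phi_{0,1}^d$, and freeness finishes immediately.

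There are two smaller slips.
\begin{enumerate}
\item With $q=e^{2\pi i\tau}$ one has $\Theta_L^{(1)}\equiv 1\pmod{q^{t+1}}$, not $\pmod{q^{2t+2}}$, since the $q^{t+1}$-coefficient is the kissing number. This still suffices, because a nonzero form in $M_{12t}(\SL_2(\mathbb Z))$ vanishes to order at most $t$ at $i\infty$.
\item In the integrality step, the relevant number is the coefficient of $q^6\zeta^6$ in $\varphi_{10,1}^6$, i.e.\ the top coefficient of $(\zeta-2+\zeta^{-1})^6$. It equals exactly $1$ under the normalization $\psi_{10}=\varphi_{10,1}$. The $\zeta^0$ coefficient, by contrast, is $924$, so ``first coefficient'' should be made precise.
\end{enumerate}
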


In ranks \(48\) and \(72\), existence is known \cite{SPLAG,nebe2014fourth,Nebe}, so part~(i) of the theorem gives actual uniqueness there. In rank \(96\), no extremal even unimodular lattice is currently known to exist, so the statement is conditional uniqueness. In rank \(120\), no extremal even unimodular lattice is presently known to exist either, so part~(ii) should be read as a conditional residual statement: any two such lattices, if they exist, yield degree-\(2\) Siegel theta series that differ by at most an integer multiple of \(\chi_{10}^6\). Throughout, \(\chi_{10}\) is taken in the standard integral normalization fixed in \eqref{eq:chi10-normalization} below; the integrality assertion in part~(ii) refers to this normalization.

For ranks \(48\), \(72\), and \(96\), the argument here recovers the genus-\(2\) uniqueness phenomenon identified by Peters \cite{Peters}, but does so in a way that makes the structural mechanism transparent: the depth filtration and the Fourier--Jacobi vanishing forced by extremality fit together exactly in those weights. For rank \(120\), the same mechanism reduces the comparison of two extremal theta series to a single integer scalar---the scalar multiplying \(\chi_{10}^6\) in their difference. To the best of the author's knowledge, the genus-\(2\) residual statement in rank \(120\) is new.

The proof has two parts. First, we develop the depth filtration on Siegel cusp forms and show that vanishing of the first \(t\) Fourier--Jacobi coefficients forces a cusp form to vanish for \(t \in \{2,3,4\}\), while for \(t = 5\) it forces the form to lie in the residual line \(\mathbb C\chi_{10}^6\). Second, we apply this criterion to differences of degree-\(2\) theta series of extremal even unimodular lattices of rank \(24t\) and conclude part~(i) of Theorem~\ref{thm:main} together with the residual statement of part~(ii); integrality of the residual scalar follows from a direct Fourier-coefficient computation for \(\chi_{10}^6\) at \(\bigl(\begin{smallmatrix}6&3\\3&6\end{smallmatrix}\bigr)\), which we carry out as part of the proof.

Thus we isolate a uniform genus-\(2\) mechanism, rooted in Ozeki's viewpoint, that explains why the argument closes in ranks \(48\), \(72\), and \(96\); and in rank \(120\), it reduces the comparison of two extremal theta series to a single integer invariant of the pair.

The paper is organized as follows. In Section~\ref{sec:depth} we develop the depth filtration using Igusa's structure theorem. Section~\ref{sec:fj-prelim} recalls the Fourier--Jacobi expansions of the relevant generators and the associated Jacobi-form input. In Section~\ref{sec:criterion} we analyze the interaction between depth and Fourier--Jacobi vanishing, obtaining the vanishing criterion that drives the proof. In Section~\ref{sec:differences} we apply that criterion to differences of degree-\(2\) theta series of extremal lattices, prove Theorem~\ref{thm:main}, and carry out the Fourier-coefficient computation needed for integrality in rank \(120\). We close with concluding remarks in Section~\ref{sec:remarks}.

\section{Igusa's ring and the depth filtration}\label{sec:depth}

The organization of the proof follows the genus-\(2\) basis viewpoint initiated by Ozeki \cite{OzekiBasis76}.

Let \(M_k(\Sp_4(\mathbb Z))\) denote the space of scalar-valued Siegel modular forms of degree \(2\) and weight \(k\), and let \(S_k(\Sp_4(\mathbb Z))\) denote the associated subspace of Siegel cusp forms. Likewise, we write \(M_k(\SL_2(\mathbb Z))\) for the space of classical modular forms of weight \(k\) on \(\SL_2(\mathbb Z)\), and let \(S_k(\SL_2(\mathbb Z))\) denote the associated subspace of cusp forms.

Let \(e_4\) and \(e_6\) denote the elliptic Eisenstein series on \(\SL_2(\mathbb Z)\) of weights \(4\) and \(6\). We also write
\[
\Delta(\tau)
:=
q\prod_{n \ge 1}(1-q^n)^{24}
=
\frac{e_4(\tau)^3 - e_6(\tau)^2}{1728}
\in S_{12}(\SL_2(\mathbb Z)),
\qquad q = e^{2\pi i\tau}.
\]

Let \(\Phi:M_k(\Sp_4(\mathbb Z)) \longrightarrow M_k(\SL_2(\mathbb Z))\) be the Siegel operator, defined by
\[
(\Phi F)(\tau)
=
\lim_{\Im(\omega) \to \infty}
F\begin{pmatrix}\tau&0\\0&\omega\end{pmatrix}.
\]
If the Fourier--Jacobi expansion of \(F\) is
\[
F\begin{pmatrix}\tau&z\\ z&\omega\end{pmatrix}
=
\sum_{m \ge 0}\phi_m(F)(\tau,z)\,(q')^m,
\qquad q' = e^{2\pi i\omega},
\]
then we have
\[
\Phi(F) = \phi_0(F).
\]

We write \(E_4^{(2)}\) and \(E_6^{(2)}\) for the genus-\(2\) Eisenstein series of weights \(4\) and \(6\), and write \(\chi_{10}\), \(\chi_{12}\) for Igusa's cusp forms of weights \(10\) and \(12\). Igusa proved that
\[
\bigoplus_{\substack{k \ge 0\\ k\ \mathrm{even}}} M_k(\Sp_4(\mathbb Z))
\cong
\mathbb C[E_4^{(2)},E_6^{(2)},\chi_{10},\chi_{12}]
\]
as a graded polynomial ring; see \cite{Igusa62,Igusa64}. Moreover,
\[
\Phi\bigl(E_4^{(2)}\bigr) = e_4,
\qquad
\Phi\bigl(E_6^{(2)}\bigr) = e_6,
\qquad
\Phi(\chi_{10}) = \Phi(\chi_{12}) = 0.
\]
For a degree-\(2\) Siegel modular form \(F\), we write
\[
F(Z) = \sum_{T \ge 0} a(F;T)\,e^{2\pi i\tr(TZ)},
\]
where \(T\) ranges over positive semidefinite half-integral symmetric \(2 \times 2\) matrices and \(a(F;T)\) is the Fourier coefficient of \(F\) at \(T\). We adopt the standard Igusa--Eichler--Zagier normalization of \(\chi_{10}\) and \(\chi_{12}\); see \cite[Section~6]{EZ}. Under this normalization, both forms have integer Fourier coefficients and, in particular,
\begin{equation}\label{eq:chi10-normalization}
a\!\left(\chi_{10};\,\begin{pmatrix}1&1/2\\1/2&1\end{pmatrix}\right) = 1,
\end{equation}
and the index-\(1\) Fourier--Jacobi coefficient of \(\chi_{10}\) is exactly the Eichler--Zagier Jacobi cusp form \(\varphi_{10,1}\) of weight \(10\) and index \(1\), introduced in Section~\ref{sec:fj-prelim}.

\begin{remark}\label{rem:elliptic-basis}
For each integer \(t \ge 0\),
\[
M_{12t}(\SL_2(\mathbb Z))
=
\bigoplus_{j=0}^{t}\mathbb C\,e_4^{\,3t-3j}\Delta^j.
\]
Indeed,
\[
\bigoplus_{k \ge 0} M_k(\SL_2(\mathbb Z)) = \mathbb C[e_4,e_6],
\]
and if \(4a+6b = 12t\), then \(b\) must be even. Thus, every weight-\(12t\) monomial in \(e_4,e_6\) can be rewritten as a polynomial in \(e_4\) and \(e_6^2 = e_4^3 - 1728\Delta\); hence, as a linear combination of the displayed monomials. These monomials have distinct vanishing orders at the cusp \(i\infty\), namely \(0,1,\dots,t\), since
\[
e_4(\tau) = 1+O(q),
\qquad
\Delta(\tau) = q+O(q^2),
\]
and so
\[
e_4(\tau)^{\,3t-3j}\Delta(\tau)^j
=
q^j+O(q^{j+1})
\qquad (0 \le j \le t);
\]
they are therefore linearly independent.
\end{remark}

We define the \emph{depth} of a monomial
\[
(E_4^{(2)})^{\alpha}(E_6^{(2)})^{\beta}\chi_{10}^a\chi_{12}^b,
\]
to be \(a+b\), i.e., the total degree in \(\chi_{10}\) and \(\chi_{12}\).

\begin{lemma}\label{lem:depth-decomp}
Let \(t \in \{2,3,4,5\}\). Every \(f \in S_{12t}(\Sp_4(\mathbb Z))\) can be written uniquely as
\[
f = \sum_{d=1}^{d_{\max}(t)} f_d,
\qquad
d_{\max}(t) =
\begin{cases}
t& t \in \{2,3,4\},\\
6& t = 5,
\end{cases}
\]
where each \(f_d\) is homogeneous of depth \(d\), namely
\[
f_d =
\sum_{\substack{a,b \ge 0\\ a+b=d\\ 10a+12b \le 12t}}
P_{a,b}(E_4^{(2)},E_6^{(2)})\,\chi_{10}^a\chi_{12}^b,
\]
with \(P_{a,b}(X,Y) \in \mathbb C[X,Y]\) weighted-homogeneous of weight \(12t-10a-12b\) for \(\deg X = 4\) and \(\deg Y = 6\). Moreover, when \(t = 5\), the depth-\(6\) part has the form
\[
f_6 = \alpha\chi_{10}^6
\qquad (\alpha \in \mathbb C).
\]
\end{lemma}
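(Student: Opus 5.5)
By Igusa's structure theorem, $M_{12t}(\Sp_4(\mathbb Z))$ has a basis of monomials
\[
(E_4^{(2)})^{\alpha}(E_6^{(2)})^{\beta}\chi_{10}^a\chi_{12}^b,\qquad 4\alpha+6\beta+10a+12b=12t .
\]
Any $f\in M_{12t}$ is uniquely a linear combination of these. Grouping the monomials by depth $d=a+b$, and for fixed $(a,b)$ collecting the $E_4^{(2)},E_6^{(2)}$ factors into $P_{a,b}$, gives a unique decomposition $f=\sum_{d\ge 0}f_d$. Each $P_{a,b}$ is weighted-homogeneous of weight $12t-10a-12b$, and $P_{a,b}\neq 0$ forces $10a+12b\le 12t$. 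Uniqueness follows from algebraic independence of the four generators: distinct $(\alpha,\beta,a,b)$ give linearly independent forms.

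Next I show that $f_0=0$ when $f$ is a cusp form. Since $\Phi$ is a ring homomorphism killing $\chi_{10}$ and $\chi_{12}$, we get $\Phi(f)=\Phi(f_0)=P_{0,0}(e_4,e_6)$. For a cusp form $\Phi(f)=0$. The map $\mathbb C[X,Y]\to\mathbb C[e_4,e_6]$ is an isomorphism because $e_4,e_6$ are algebraically independent (the elliptic structure theorem used in Remark~\ref{rem:elliptic-basis}). Hence $P_{0,0}=0$ and $f_0=0$, so the sum starts at $d=1$.

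The remaining step, the only one with content, bounds the depth. Nonzero terms need $10a+12b\le 12t$ together with a weight $12t-10a-12b$ that is realized by some monomial in $X,Y$. The realizable weights are $0$ and all even integers $\ge 4$, so the value $2$ is excluded. Since $10a+12b\ge 10(a+b)$, we get $d\le \lfloor 12t/10\rfloor$, which is $2,3,4$ for $t=2,3,4$ (values $2.4$, $3.6$, $4.8$) and $6$ for $t=5$. This gives $d_{\max}(t)$ as stated.

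For $t=5$ and $d=6$, write $10a+12b=60+2b$. The condition $\le 60$ forces $b=0$, hence $a=6$ and weight $60-60=0$. Then $P_{6,0}$ is a constant $\alpha$, so $f_6=\alpha\chi_{10}^6$.

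The ranges with $d_{\max}$ smaller than the crude bound need no extra care: the bound is attained only as an upper bound, and the statement just allows possibly zero $f_d$. I expect no genuine obstacle; the only point needing care is invoking algebraic independence correctly, both for uniqueness and for the $\Phi$ argument.
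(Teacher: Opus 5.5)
Your proposal is correct and follows essentially the same route as the paper. It kills the depth-$0$ part via $\Phi$ and the algebraic independence of $e_4,e_6$, and bounds the depth using $10a+12b\ge 10(a+b)$; your uniform bound $d\le\lfloor 12t/10\rfloor$ is just a compact form of the paper's checks $10(t+1)>12t$ and $70>60$. At $t=5$ you force $(a,b)=(6,0)$ with a constant coefficient via $10a+12b=60+2b$, and you get uniqueness from the Igusa monomial basis, both exactly as in the paper. Your remark that weight $2$ is unrealizable is true but not needed for this lemma.
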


\begin{proof}
Because \(f\) is cuspidal, \(\Phi(f) = 0\). We write
\[
f = A\bigl(E_4^{(2)},E_6^{(2)}\bigr)+B,
\]
where every monomial occurring in \(B\) contains at least one factor of \(\chi_{10}\) or \(\chi_{12}\). Applying \(\Phi\) gives
\[
0 = \Phi(f) = A(e_4,e_6).
\]
Now, since
\[
\bigoplus_{k \ge 0} M_k(\SL_2(\mathbb Z)) = \mathbb C[e_4,e_6]
\]
is a polynomial ring, the map
\[
A(X,Y) \longmapsto A(e_4,e_6)
\]
is injective on weighted-homogeneous polynomials. Hence \(A = 0\), so every monomial of \(f\) has positive depth.

Grouping monomials according to their depth---that is, according to the total degree \(a+b\) in \(\chi_{10}\) and \(\chi_{12}\)---yields a decomposition
\[
f = \sum_{d \ge 1} f_d
\]
into depth-homogeneous parts. The weight bound then shows that only the asserted depths can occur:
\begin{itemize}
\item For \(t \in \{2,3,4\}\), a monomial of depth \(d\) has weight at least \(10d\). Since
\[
10(t+1) > 12t
\qquad \text{for \(t \in \{2,3,4\}\)},
\]
no term of depth \(>t\) can occur.

\item For \(t = 5\), depth \(\ge 7\) is impossible because \(10 \cdot 7 > 60\), so depths \(1,\dots,6\) are the only possibilities. Finally, suppose \(t = 5\) and \(a+b = 6\) with \(10a+12b \le 60\). Then
\[
10a+12b = 10(a+b)+2b = 60+2b \le 60,
\]
so necessarily \(b = 0\) and \(a = 6\). The coefficient polynomial then has weight \(0\), and hence is constant; this proves the claim about \(f_6\).
\end{itemize}
Uniqueness follows from the monomial basis of the polynomial ring.
\end{proof}

\begin{remark}\label{rem:explicit}
Lemma \ref{lem:depth-decomp} specializes as follows:
\begin{itemize}
\item For weight \(24\) \((t = 2)\), the possible depths are \(1\) and \(2\), and the depth-\(2\) part is
\[
\alpha E_4^{(2)}\chi_{10}^2+\beta \chi_{12}^2.
\]
The monomial \(\chi_{10}\chi_{12}\) does not occur because it would require a coefficient of weight \(2\), and there is no nonzero weighted-homogeneous polynomial of weight \(2\) in variables of weights \(4\) and \(6\).

\item For weight \(36\) \((t = 3)\), the possible depths are \(1\), \(2\), and \(3\), and the depth-\(3\) part is
\[
\alpha E_6^{(2)}\chi_{10}^3
+\beta E_4^{(2)}\chi_{10}^2\chi_{12}
+\gamma \chi_{12}^3.
\]
This time, the monomial \(\chi_{10}\chi_{12}^2\) does not occur because it would require a coefficient of weight~\(2\).

\item For weight \(48\) \((t = 4)\), the possible depths are \(1\), \(2\), \(3\), and \(4\), and the depth-\(4\) part is
\[
\alpha (E_4^{(2)})^2\chi_{10}^4
+\beta E_6^{(2)}\chi_{10}^3\chi_{12}
+\gamma E_4^{(2)}\chi_{10}^2\chi_{12}^2
+\delta \chi_{12}^4.
\]
And here, the monomial \(\chi_{10}\chi_{12}^3\) does not occur because it would require a coefficient of weight \(2\).

\item Finally, for weight \(60\) \((t = 5)\), the possible depths are \(1\), \(2\), \(3\), \(4\), \(5\), and \(6\), and the depth-\(6\) part is the only depth-\(6\) monomial, i.e.,
\[
\alpha\chi_{10}^6.
\]
\end{itemize}
\end{remark}

\begin{remark}\label{rem:dimensions}
For orientation, the depth decomposition of Lemma~\ref{lem:depth-decomp} refines the cusp form spaces as follows:
\begin{align*}
\dim S_{24}(\Sp_4(\mathbb Z)) &= 5 = 3+2,\\
\dim S_{36}(\Sp_4(\mathbb Z)) &= 13 = 5+5+3,\\
\dim S_{48}(\Sp_4(\mathbb Z)) &= 26 = 7+8+7+4,\\
\dim S_{60}(\Sp_4(\mathbb Z)) &= 46 = 9+11+11+9+5+1,
\end{align*}
where the summands are the dimensions of the depth-\(1,2,\dots,d_{\max}(t)\) components, respectively. Proposition~\ref{prop:criterion} in the sequel eliminates these components one layer at a time.
\end{remark}

\section{Fourier--Jacobi preliminaries}\label{sec:fj-prelim}

We write
\[
Z =
\begin{pmatrix}
\tau & z\\
z & \omega
\end{pmatrix}
\in \mathbb H_2,
\qquad
q' = e^{2\pi i\omega}.
\]
We recall that every \(F \in M_k(\Sp_4(\mathbb Z))\) has a Fourier--Jacobi expansion
\[
F(Z) = \sum_{m \ge 0}\phi_m(F)(\tau,z)\,(q')^m,
\]
where \(\phi_m(F)\) is a Jacobi form of weight \(k\) and index \(m\). The index-\(0\) coefficient is
\[
\phi_0(F) = \Phi(F).
\]
In particular, if \(F\) is cuspidal, then \(\phi_0(F) = 0\); conversely, for the full modular group, \(\phi_0(F) = 0\) implies that \(F\) is cuspidal.

We recall that
\[
E_4^{(2)}(Z) = e_4(\tau)+O(q'),
\qquad
E_6^{(2)}(Z) = e_6(\tau)+O(q').
\]

For \(\chi_{10}\) and \(\chi_{12}\), it is classical that the first nonzero Fourier--Jacobi coefficient has index \(1\); thus (see \cite{Igusa64,EZ})
\[
\chi_{10}(Z) = \psi_{10}(\tau,z)\,q'+O\bigl((q')^2\bigr),
\qquad
\chi_{12}(Z) = \psi_{12}(\tau,z)\,q'+O\bigl((q')^2\bigr),
\]
with \(\psi_{10} \in J^{\mathrm{cusp}}_{10,1}\) and \(\psi_{12} \in J^{\mathrm{cusp}}_{12,1}\).
(Here, we write \(J^{\mathrm{cusp}}_{k,1}\) for the space of Jacobi cusp forms of weight \(k\) and index \(1\).)

The preceding expansions imply the basic principle underlying our depth argument: if
\[
M = (E_4^{(2)})^{\alpha}(E_6^{(2)})^{\beta}\chi_{10}^a\chi_{12}^b
\]
is a monomial of depth \(d = a+b\), then its Fourier--Jacobi expansion has the form
\[
M(Z) = \Psi(\tau,z)\,(q')^d+O\bigl((q')^{d+1}\bigr)
\]
for some Jacobi form \(\Psi\), and in particular \(\phi_m(M) = 0\) for \(m < d\). Indeed, \(E_4^{(2)}\) and \(E_6^{(2)}\) have nonzero index-\(0\) terms, while
\(\chi_{10}\) and \(\chi_{12}\) both begin in index \(1\).

The basic bookkeeping principle is therefore immediate: every factor of \(\chi_{10}\) or \(\chi_{12}\) contributes at least one unit of Jacobi index, whereas the Eisenstein factors may contribute index \(0\).

\begin{lemma}\label{lem:depth-fj}
Let
\[
M = P\bigl(E_4^{(2)},E_6^{(2)}\bigr)\chi_{10}^a\chi_{12}^b,
\qquad d = a+b,
\]
where \(P(X,Y)\) is weighted-homogeneous. Then, we have
\begin{gather*}
\phi_m(M) = 0 \qquad (m < d),\\
\phi_d(M) = P(e_4,e_6)\,\psi_{10}^a\psi_{12}^b.
\end{gather*}
Consequently, if \(f_d\) is the depth-\(d\) component in Lemma~\ref{lem:depth-decomp}, then
\[
\phi_d(f_d)
=
\sum_{\substack{a,b \ge 0\\ a+b=d\\ 10a+12b \le 12t}}
P_{a,b}(e_4,e_6)\,\psi_{10}^a\psi_{12}^b.
\]
\end{lemma}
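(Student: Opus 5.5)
The plan is to multiply Fourier--Jacobi expansions as formal power series in \(q'\) with coefficients that are functions of \((\tau,z)\). Write
\[
E_4^{(2)} = e_4+O(q'),\qquad E_6^{(2)} = e_6+O(q'),
\]
where \(e_4,e_6\) are viewed as functions of \(\tau\) alone (index-\(0\) Jacobi forms). Similarly write
\[
\chi_{10} = \psi_{10}\,q'+O\bigl((q')^2\bigr),\qquad \chi_{12} = \psi_{12}\,q'+O\bigl((q')^2\bigr),
\]
as recalled above. The Fourier--Jacobi expansion is just the expansion in powers of \(q'\), with coefficients holomorphic in \((\tau,z)\). It therefore respects products: if \(F = \sum_{m}\phi_m(F)(q')^m\) and \(G = \sum_m \phi_m(G)(q')^m\), then
\[
\phi_m(FG) = \sum_{i+j=m}\phi_i(F)\phi_j(G).
\]
This is legitimate because both series converge absolutely and locally uniformly in \(\Im\omega\) large (indeed on all of \(\mathbb H_2\)), so Cauchy multiplication applies.

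First, handle a single monomial \((E_4^{(2)})^{\alpha}(E_6^{(2)})^{\beta}\). Its expansion is \(e_4^{\alpha}e_6^{\beta}+O(q')\), by induction on the number of factors using the product rule. By linearity over the monomials of \(P\), the form \(P(E_4^{(2)},E_6^{(2)})\) has expansion \(P(e_4,e_6)+O(q')\). Next, \(\chi_{10}^a\chi_{12}^b\) has lowest term \(\psi_{10}^a\psi_{12}^b(q')^{a+b}\). Again by induction: a product of a series starting at \((q')^{r}\) with leading coefficient \(A\) and one starting at \((q')^{s}\) with leading coefficient \(B\) starts at \((q')^{r+s}\) with leading coefficient \(AB\), all higher cross terms being of larger order. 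Combining the two factors, \(M\) has expansion \(P(e_4,e_6)\psi_{10}^a\psi_{12}^b(q')^d+O((q')^{d+1})\). This gives both \(\phi_m(M)=0\) for \(m<d\) and the formula for \(\phi_d(M)\).

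The consequence for \(f_d\) follows by linearity of \(f\mapsto\phi_d(f)\). Each summand \(P_{a,b}(E_4^{(2)},E_6^{(2)})\chi_{10}^a\chi_{12}^b\) with \(a+b=d\) is of the form treated above, so their index-\(d\) coefficients add.

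There is no real obstacle. The only point deserving a sentence is the justification of termwise multiplication of Fourier--Jacobi expansions. I would handle it by noting that \(\phi_m(F)(\tau,z)=\int_0^1 F\,e^{-2\pi i m\omega}\,d(\Re\omega)\) identifies the expansion as a genuine convergent Laurent-type expansion in \(q'\), so the Cauchy product formula holds. The problem does not require us to check that \(\psi_{10}^a\psi_{12}^b P(e_4,e_6)\) is nonzero, only the identity itself.
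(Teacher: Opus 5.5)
Your proposal is correct and follows essentially the same route as the paper: you expand each factor in powers of \(q'\), observe that \(\chi_{10}^a\chi_{12}^b\) begins at \((q')^{a+b}\) with leading coefficient \(\psi_{10}^a\psi_{12}^b\) while the Eisenstein factors can only contribute their index-\(0\) parts \(e_4,e_6\) to the \((q')^d\) coefficient, and then sum over monomials by linearity. Your explicit justification of the Cauchy product, via convergence of the \(q'\)-expansions, is a detail the paper leaves implicit.
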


\begin{proof}
We write
\begin{gather*}
E_4^{(2)}(Z) = e_4(\tau)+\sum_{m \ge 1}\alpha_m(\tau,z)\,(q')^m,
\qquad
E_6^{(2)}(Z) = e_6(\tau)+\sum_{m \ge 1}\beta_m(\tau,z)\,(q')^m,
\\
\chi_{10}(Z) = \psi_{10}(\tau,z)\,q'+O\bigl((q')^2\bigr),
\qquad
\chi_{12}(Z) = \psi_{12}(\tau,z)\,q'+O\bigl((q')^2\bigr),
\end{gather*}
so that
\[
\chi_{10}^a\chi_{12}^b
=
\psi_{10}^a\psi_{12}^b\,(q')^{a+b}
+
O\bigl((q')^{a+b+1}\bigr).
\]
Since any positive-index term chosen from \(E_4^{(2)}\) or \(E_6^{(2)}\) increases the \(q'\)-order, the coefficient of \((q')^d\) in
\[
P\bigl(E_4^{(2)},E_6^{(2)}\bigr)\chi_{10}^a\chi_{12}^b
\]
comes only from taking the index-\(0\) terms \(e_4,e_6\) from the Eisenstein factors. Hence,
\[
\phi_d(M) = P(e_4,e_6)\,\psi_{10}^a\psi_{12}^b,
\]
and \(\phi_m(M) = 0\) for \(m < d\). Summing over the monomials occurring in \(f_d\) gives the final formula.
\end{proof}

By Eichler--Zagier \cite[Theorem~3.6 and the discussion following it]{EZ}, the spaces of Jacobi cusp forms \(J^{\mathrm{cusp}}_{10,1}\) and \(J^{\mathrm{cusp}}_{12,1}\) are one-dimensional. Let \(\varphi_{10,1}\) and \(\varphi_{12,1}\) be generators normalized by
\[
\varphi_{10,1}(\tau,z)
=
q\bigl(\zeta+\zeta^{-1}-2\bigr)+O(q^2),
\qquad
\varphi_{12,1}(\tau,z)
=
q\bigl(\zeta+\zeta^{-1}+10\bigr)+O(q^2),
\]
where \(q = e^{2\pi i\tau}\) and \(\zeta = e^{2\pi iz}\). With our convention from Section~\ref{sec:depth},
\[
\psi_{10} = \varphi_{10,1},
\qquad
\psi_{12} = c'\,\varphi_{12,1}
\]
for some nonzero constant \(c' \in \mathbb C\); see \cite[Section~6]{EZ}.

\begin{lemma}\label{lem:nonproportional-open-set}
There exists a nonempty connected open set \(U \subset \mathbb H\) such that, for every \(\tau \in U\),
\begin{enumerate}
\item \(\psi_{12}(\tau,\cdot) \not\equiv 0\);
\item the functions \(z \mapsto \psi_{10}(\tau,z)\) and \(z \mapsto \psi_{12}(\tau,z)\) are not proportional.
\end{enumerate}
\end{lemma}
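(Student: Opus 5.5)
The plan is to reduce both conditions to a nonvanishing statement about a single holomorphic function of \(\tau\), and then use that the zero set of a nonzero holomorphic function on the connected domain \(\mathbb H\) is discrete.

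\textbf{Condition (1).} The function \(\psi_{12}=c'\varphi_{12,1}\) with \(c'\neq 0\) is not identically zero. Its Fourier expansion begins \(q(\zeta+\zeta^{-1}+10)\). Hence there is some \(z_0\) for which \(\tau\mapsto\psi_{12}(\tau,z_0)\) is a nonzero holomorphic function on \(\mathbb H\). Take \(z_0=0\): the leading coefficient is \(12\neq 0\), so \(\psi_{12}(\tau,0)=12c'q+O(q^2)\not\equiv 0\). Its zero set \(D_1\subset\mathbb H\) is therefore discrete, and for \(\tau\notin D_1\) we already have \(\psi_{12}(\tau,\cdot)\not\equiv 0\).

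\textbf{Condition (2).} For non-proportionality I would use a Wronskian-type determinant
\[
W(\tau):=\psi_{10}(\tau,z_1)\psi_{12}(\tau,z_2)-\psi_{10}(\tau,z_2)\psi_{12}(\tau,z_1)
\]
for suitably chosen fixed points \(z_1,z_2\). If the two functions of \(z\) were proportional at some \(\tau\), every such \(2\times 2\) determinant would vanish there. So it suffices to find \(z_1,z_2\) with \(W\not\equiv 0\) as a function of \(\tau\).

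To check this, expand in \(q\). The \(q^1\) coefficient of \(W\) vanishes, since each factor is \(O(q)\). The \(q^2\) coefficient is
\[
c'\bigl[(\zeta_1+\zeta_1^{-1}-2)(\zeta_2+\zeta_2^{-1}+10)-(\zeta_2+\zeta_2^{-1}-2)(\zeta_1+\zeta_1^{-1}+10)\bigr].
\]
Writing \(u_i=\zeta_i+\zeta_i^{-1}\), this is \(c'\cdot 12(u_1-u_2)\). Choosing \(z_1=0\) and \(z_2=1/2\) gives \(u_1=2\), \(u_2=-2\), so the coefficient is \(48c'\neq 0\). Thus \(W(\tau)=48c'q^2+O(q^3)\not\equiv 0\). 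Its zero set \(D_2\) is discrete, and for \(\tau\notin D_2\) the functions are not proportional. Indeed, \(W(\tau)\neq0\) also forces \(\psi_{12}(\tau,\cdot)\not\equiv0\), so condition (1) follows as a byproduct.

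\textbf{Choice of \(U\).} Let \(U\) be any small open disc in \(\mathbb H\setminus(D_1\cup D_2)\). This set is nonempty and open, because \(D_1\cup D_2\) is closed and discrete. If desired, \(U=\mathbb H\setminus(D_1\cup D_2)\) itself is connected, since removing a discrete set from a connected open subset of \(\mathbb C\) preserves connectedness.

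\textbf{Main obstacle.} The only step needing care is justifying the term-by-term \(q\)-expansion evaluation at fixed \(z\). This is legitimate because the Fourier expansions of Jacobi forms converge absolutely and locally uniformly on \(\mathbb H\times\mathbb C\). One should also check that the \(q^1\) coefficients in the normalization of \(\varphi_{10,1}\) and \(\varphi_{12,1}\) are as stated; I take these from the paper's normalization.
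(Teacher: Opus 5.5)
Your proof is correct and follows essentially the same route as the paper: both form a \(2\times 2\) determinant from two fixed values of \(z\) and read off a nonzero \(q^2\)-coefficient (the paper takes \(z=\tfrac12,\tfrac13\) and gets \(-12c'q^2+O(q^3)\); you take \(z=0,\tfrac12\) and get \(48c'q^2+O(q^3)\)). The paper imposes \(\psi_{12}(\tau,\tfrac12)\neq 0\) as a separate condition, whereas you correctly observe that condition (1) already follows from \(W(\tau)\neq 0\).
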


\begin{proof}
By construction, \(\psi_{10} = \varphi_{10,1}\) and \(\psi_{12}\) is a nonzero scalar multiple of \(\varphi_{12,1}\), with first \(q\)-coefficients
\[
q\bigl(\zeta+\zeta^{-1}-2\bigr)
\qquad \text{and} \qquad
q\bigl(\zeta+\zeta^{-1}+10\bigr),
\]
respectively. These leading coefficients are not proportional as functions of \(z\). Indeed, if we evaluate at \(z = \frac12\) and \(z = \frac13\), then
\[
\zeta+\zeta^{-1} = -2 \quad \text{for } z = \frac12,
\qquad
\zeta+\zeta^{-1} = -1 \quad \text{for } z = \frac13.
\]
Hence, as \(\Im(\tau) \to \infty\),
\begin{gather*}
\psi_{10}\left(\tau,\frac12\right) = q\bigl(-4+O(q)\bigr),
\qquad
\psi_{12}\left(\tau,\frac12\right) = c'\,q\bigl(8+O(q)\bigr),\\
\psi_{10}\left(\tau,\frac13\right) = q\bigl(-3+O(q)\bigr),
\qquad
\psi_{12}\left(\tau,\frac13\right) = c'\,q\bigl(9+O(q)\bigr),
\end{gather*}
for some nonzero constant \(c' \in \mathbb C\). Therefore the holomorphic function
\[
R(\tau)
:=
\psi_{10}\left(\tau,\frac12\right)\psi_{12}\left(\tau,\frac13\right)
-
\psi_{10}\left(\tau,\frac13\right)\psi_{12}\left(\tau,\frac12\right)
\]
satisfies
\[
R(\tau) = c'\,q^2\bigl((-4)(9)-(-3)(8)+O(q)\bigr)
=
-12c'\,q^2+O(q^3),
\]
so \(R\) is not identically zero. Hence \(R(\tau) \neq 0\) on a nonempty open subset of \(\mathbb H\), and for such \(\tau\) the functions \(z \mapsto \psi_{10}(\tau,z)\) and \(z \mapsto \psi_{12}(\tau,z)\) are not proportional. Shrinking the open set further if necessary, we may also require that \(\psi_{12}(\tau,\frac12) \neq 0\), so that \(\psi_{12}(\tau,\cdot) \not\equiv 0\); this yields an open set. Taking \(U\) to be a sufficiently small connected open subset on which \(R(\tau) \neq 0\) and \(\psi_{12}(\tau,\frac12) \neq 0\) gives the result.
\end{proof}

\begin{lemma}\label{lem:binary-powers}
Let \(d \ge 0\), let \(V \subset \mathbb C\) be a connected open set, and let
\(A,B\) be meromorphic functions on \(V\). Assume that \(B \not\equiv 0\) and that
\(A\) and \(B\) are not proportional. Then the functions
\[
A^aB^{d-a} \qquad (0 \le a \le d)
\]
are linearly independent over \(\mathbb C\).
\end{lemma}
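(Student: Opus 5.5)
The plan is to reduce to a one-variable polynomial identity by dividing by a suitable power of \(B\). Suppose we have a relation
\[
\sum_{a=0}^{d} c_a\,A^aB^{d-a} = 0
\]
on \(V\) with \(c_a\in\mathbb C\). Since \(B\not\equiv 0\) and \(V\) is connected, the zeros and poles of \(B\) form a discrete subset of \(V\). Hence \(V' := V\setminus\{\text{zeros and poles of } A \text{ and } B\}\) is still a connected open set; removing a discrete closed set from a connected open subset of \(\mathbb C\) preserves connectedness. (If \(A\equiv 0\), we just note that \(A\) and \(B\) would then be proportional, with \(A=0\cdot B\); so \(A\not\equiv 0\) as well.)

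On \(V'\) we may divide the relation by \(B^d\). This gives
\[
\sum_{a=0}^d c_a\,h^a = 0, \qquad h := A/B,
\]
so \(h\) is a holomorphic function on \(V'\) satisfying \(P(h)=0\) for the polynomial \(P(X)=\sum_a c_aX^a\in\mathbb C[X]\).

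The key step is to show that \(h\) is nonconstant, and this is exactly where the non-proportionality hypothesis enters. If \(h\) were constant, say \(h\equiv\lambda\) on \(V'\), then \(A=\lambda B\) on \(V'\). By the identity theorem for meromorphic functions on the connected set \(V\), this would give \(A=\lambda B\) on all of \(V\), contradicting the assumption that \(A\) and \(B\) are not proportional.

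Now a nonconstant holomorphic function on a connected open set is an open map, so \(h(V')\) contains an open disc and in particular infinitely many points. The polynomial \(P\) vanishes at every point of \(h(V')\), so \(P\) has infinitely many roots and must be the zero polynomial. Thus every \(c_a=0\), which proves the claimed linear independence.

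The argument needs no real computation. The only point requiring care is the bookkeeping with meromorphic functions: working on the punctured set \(V'\), and checking that it remains connected so that the identity theorem and the open mapping theorem both apply. The case \(d=0\) is trivial, since the single function \(B^0=1\) is nonzero.
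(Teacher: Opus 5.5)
Your proof is correct and follows essentially the same route as the paper: divide the relation by \(B^d\), note that \(A/B\) is nonconstant by non-proportionality, and use the open mapping theorem to force the polynomial \(\sum_a c_aX^a\) to vanish. The only difference is cosmetic: the paper localizes to a small disk around a point where \((A/B)'\neq 0\), whereas you work on the whole punctured domain \(V'\) and use its connectedness.
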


\begin{proof}
Since \(A\) and \(B\) are not proportional, the meromorphic function
\[
u := A/B
\]
is nonconstant. Hence there exists a point \(z_0 \in V\) away from the poles of \(u\) and zeros of \(B\) at which \(u'(z_0) \neq 0\). On a sufficiently small disk \(W \subset V\) centered at \(z_0\),
the function \(u\) is holomorphic and nonconstant, and \(B\) has no zeros.
Suppose that
\[
\sum_{a=0}^{d} c_a A^aB^{d-a} = 0
\]
identically on \(V\). Dividing by \(B^d\) on \(W\) gives
\[
\sum_{a=0}^{d} c_a\,u^a = 0
\qquad \text{on } W.
\]
Since \(u(W)\) is open, the polynomial \(\sum_{a=0}^{d} c_a X^a\) vanishes on an
open subset of \(\mathbb C\), and therefore all the \(c_a\) must be zero.
\end{proof}

\section{Fourier--Jacobi vanishing and the residual line}\label{sec:criterion}

We now combine the depth decomposition of Lemma~\ref{lem:depth-decomp} with Lemmas~\ref{lem:nonproportional-open-set} and~\ref{lem:binary-powers} to obtain the criterion that drives the proof of the main theorem.

\begin{proposition}\label{prop:criterion}
Let \(t \in \{2,3,4,5\}\), and let \(f \in S_{12t}(\Sp_4(\mathbb Z))\). If
\[
\phi_1(f) = \phi_2(f) = \cdots = \phi_t(f) = 0,
\]
then we have
\begin{align*}
f &= 0 \qquad (t \in \{2,3,4\}),\\
f &\in \mathbb C\chi_{10}^6 \qquad (t = 5).
\end{align*}
\end{proposition}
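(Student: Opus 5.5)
The plan is to peel off the depth components of \(f\) one layer at a time, by induction on the depth, using Lemma~\ref{lem:depth-fj} together with Lemmas~\ref{lem:nonproportional-open-set} and~\ref{lem:binary-powers}.

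Write \(f=\sum_{d=1}^{d_{\max}(t)} f_d\) as in Lemma~\ref{lem:depth-decomp}. We claim that \(f_1=\cdots=f_{d}=0\) for each \(d\le t\). Suppose \(f_1=\cdots=f_{d-1}=0\) has already been shown, where \(1\le d\le t\). By Lemma~\ref{lem:depth-fj}, every \(f_{d'}\) with \(d'>d\) has \(\phi_d(f_{d'})=0\). Hence
\[
0=\phi_d(f)=\phi_d(f_d)=\sum_{a+b=d}P_{a,b}(e_4,e_6)\,\psi_{10}^a\psi_{12}^b .
\]
It remains to deduce from this identity that every \(P_{a,b}=0\).

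To get there, fix \(\tau\in U\), with \(U\) the open set of Lemma~\ref{lem:nonproportional-open-set}, and regard the identity as an identity in \(z\) on a connected open subset of \(\mathbb C\). Take \(A=\psi_{10}(\tau,\cdot)\) and \(B=\psi_{12}(\tau,\cdot)\); these are holomorphic, \(B\not\equiv0\), and \(A,B\) are not proportional. Lemma~\ref{lem:binary-powers} then shows that the functions \(A^aB^{d-a}\) are linearly independent. Since the coefficients \(P_{a,b}(e_4,e_6)(\tau)\) depend only on \(\tau\), each of them vanishes for every \(\tau\in U\).

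Each \(P_{a,b}(e_4,e_6)\) is holomorphic on \(\mathbb H\) and vanishes on the open set \(U\), so it vanishes identically. Because \(e_4,e_6\) are algebraically independent, this gives \(P_{a,b}=0\), and therefore \(f_d=0\). This completes the induction, so \(f_1=\cdots=f_t=0\).

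For \(t\in\{2,3,4\}\) we have \(d_{\max}(t)=t\), so \(f=0\). For \(t=5\), what remains is \(f=\alpha\chi_{10}^6\), where \(\alpha\chi_{10}^6\) is the depth-\(6\) part from Lemma~\ref{lem:depth-decomp}. This is exactly the claim \(f\in\mathbb C\chi_{10}^6\).

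The only delicate point is justifying that \(\phi_d(f_{d'})=0\) for all \(d'>d\) while \(\phi_d\) of the lower layers is killed by the induction hypothesis. This rests on Lemma~\ref{lem:depth-fj} applied monomial by monomial; the rest of the argument is linear independence.
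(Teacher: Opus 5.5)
Your proposal is correct and follows the paper's proof essentially step for step: the same induction on depth, with Lemma~\ref{lem:depth-fj} isolating \(\phi_d(f)=\phi_d(f_d)\) and Lemmas~\ref{lem:nonproportional-open-set} and~\ref{lem:binary-powers} killing each layer. The only cosmetic difference is the ending of each step: you show every \(P_{a,b}(e_4,e_6)\) vanishes on \(U\) and then invoke the identity theorem, whereas the paper argues by contradiction, choosing a single \(\tau\in U\) at which not all \(P_{a,b}(e_4,e_6)(\tau)\) vanish.
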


\begin{proof}
By Lemma~\ref{lem:depth-decomp}, we may write
\[
f = f_1+\cdots+f_{d_{\max}(t)},
\]
where \(f_d\) has depth \(d\). Let \(U \subset \mathbb H\) be the nonempty open set furnished by Lemma~\ref{lem:nonproportional-open-set}.

We show by induction on \(d = 1,\dots,t\) that \(f_d = 0\). Suppose first the inductive hypothesis that \(f_1 = \cdots = f_{d-1} = 0\) (vacuously if \(d = 1\)). Then, by Lemma~\ref{lem:depth-fj}, the \(d\)-th Fourier--Jacobi coefficient of \(f\) is exactly the \(d\)-th Fourier--Jacobi coefficient of \(f_d\), since terms of depth \(<d\) are absent and terms of depth \(>d\) contribute only to indices \(>d\). Thus,
\[
\phi_d(f) = \phi_d(f_d) = 0.
\]

We write
\[
f_d =
\sum_{\substack{a,b \ge 0\\ a+b=d\\ 10a+12b \le 12t}}
P_{a,b}(E_4^{(2)},E_6^{(2)})\chi_{10}^a\chi_{12}^b,
\]
and put
\[
p_{a,b}(\tau) := P_{a,b}(e_4(\tau),e_6(\tau)).
\]
Then Lemma~\ref{lem:depth-fj} gives
\[
0 = \phi_d(f) =
\sum_{\substack{a,b \ge 0\\ a+b=d\\ 10a+12b \le 12t}}
p_{a,b}\,\psi_{10}^a\psi_{12}^b.
\]

We assume for the sake of seeking a contradiction that \(f_d \neq 0\). Then not all \(P_{a,b}\) vanish, and because---as in the proof of Lemma~\ref{lem:depth-decomp}---the map \(P(X,Y) \mapsto P(e_4,e_6)\) is injective on weighted-homogeneous polynomials, not all \(p_{a,b}\) vanish. Since not all \(p_{a,b}\) vanish identically, the set
\[
Z := \{\tau \in U : p_{a,b}(\tau)=0\text{ for all admissible \((a,b)\)}\}
\]
is a proper analytic subset of \(U\). In particular, \(U \setminus Z \neq \varnothing\), so we may choose \(\tau \in U\) such that not all \(p_{a,b}(\tau)\) vanish.

Now, we define
\[
A(z) := \psi_{10}(\tau,z),
\qquad B(z) := \psi_{12}(\tau,z).
\]
By Lemma~\ref{lem:nonproportional-open-set}, \(A\) and \(B\) are not proportional and \(B \not\equiv 0\). Evaluating the preceding identity at our fixed \(\tau\) yields an identity of meromorphic functions of \(z\):
\[
0 =
\sum_{\substack{a,b \ge 0\\ a+b=d\\ 10a+12b \le 12t}}
p_{a,b}(\tau)A(z)^aB(z)^b.
\]
By Lemma~\ref{lem:binary-powers}, the full family \(\{A^a B^{d-a}\}_{0 \le a \le d}\) is linearly independent over \(\mathbb C\); the subfamily indexed by pairs \((a,b)\) with \(a+b = d\) and \(10a+12b \le 12t\) is therefore also linearly independent. Hence all coefficients \(p_{a,b}(\tau)\) in the displayed identity vanish, contradicting our choice of \(\tau\). Hence \(f_d = 0\).

Induction now gives \(f_1 = \cdots = f_t = 0\). If \(t \in \{2,3,4\}\), then Lemma~\ref{lem:depth-decomp} shows that no further depth can occur, so \(f = 0\). If \(t = 5\), only the depth-\(6\) term remains, and Lemma~\ref{lem:depth-decomp} shows that it is a scalar multiple of \(\chi_{10}^6\), as claimed.
\end{proof}

\section{Differences of degree-\(2\) theta series of extremal lattices}\label{sec:differences}

We now verify that extremality forces the vanishing hypothesis in Proposition~\ref{prop:criterion}, and then apply the proposition to differences of degree-\(2\) theta series.

\begin{lemma}\label{lem:fj-vanish}
Let \(t \in \{2,3,4,5\}\), and let \(L\) be an extremal even unimodular lattice of rank \(24t\). Then
\[
\phi_m(\Theta_L^{(2)}) = 0
\qquad (1 \le m \le t).
\]
Consequently, if \(L\) and \(L'\) are extremal even unimodular lattices of rank \(24t\), then
\[
\phi_m\bigl(\Theta_L^{(2)}-\Theta_{L'}^{(2)}\bigr) = 0
\qquad (1 \le m \le t).
\]
\end{lemma}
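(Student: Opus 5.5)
The plan is to read off the Fourier--Jacobi coefficients of \(\Theta_L^{(2)}\) directly from the lattice sum and to observe that the index is governed by the norm of the second vector. Writing \(Z=\bigl(\begin{smallmatrix}\tau&z\\ z&\omega\end{smallmatrix}\bigr)\) and \((x_1,x_2)\in L^2\), the exponent in the definition of \(\Theta_L^{(2)}\) expands as
\[
\pi i\,\tr\bigl(((x_i,x_j))Z\bigr)
=\pi i\bigl((x_1,x_1)\tau+2(x_1,x_2)z+(x_2,x_2)\omega\bigr).
\]
Hence each summand equals \(q^{(x_1,x_1)/2}\zeta^{(x_1,x_2)}(q')^{(x_2,x_2)/2}\), with \(q=e^{2\pi i\tau}\), \(\zeta=e^{2\pi iz}\), \(q'=e^{2\pi i\omega}\). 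Since \(L\) is even, \((x_2,x_2)/2\) is a nonnegative integer.

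Grouping the terms by \(m=(x_2,x_2)/2\), the expansion takes the form \(\sum_{m\ge0}\phi_m(\Theta_L^{(2)})(\tau,z)\,(q')^m\) with
\[
\phi_m(\Theta_L^{(2)})(\tau,z)=\sum_{\substack{x_2\in L\\ (x_2,x_2)=2m}}\ \sum_{x_1\in L} q^{(x_1,x_1)/2}\zeta^{(x_1,x_2)}.
\]
The rearrangement is justified by absolute convergence on \(\mathbb H_2\), and the result agrees with the Fourier--Jacobi expansion recalled in Section~\ref{sec:fj-prelim} by uniqueness of Fourier expansions. Equivalently, one can argue with Fourier coefficients: \(a(\Theta_L^{(2)};T)\) with \(T=\bigl(\begin{smallmatrix}n&r/2\\ r/2&m\end{smallmatrix}\bigr)\) counts the pairs \((x_1,x_2)\) with Gram matrix \(2T\), and \(\phi_m\) collects exactly the coefficients whose lower-right entry is \(m\).

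Now extremality gives \(\min(L)=2\lfloor 24t/24\rfloor+2=2t+2\). Therefore every nonzero \(x_2\in L\) satisfies \((x_2,x_2)\ge 2t+2\). If \(1\le m\le t\), a vector with \((x_2,x_2)=2m\) would have \(0<2m\le 2t<2t+2\), so no such vector exists. The inner sum above is thus empty, and \(\phi_m(\Theta_L^{(2)})=0\) for \(1\le m\le t\). The statement for \(\Theta_L^{(2)}-\Theta_{L'}^{(2)}\) then follows because \(F\mapsto\phi_m(F)\) is linear.

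There is no serious obstacle in this argument. The only step needing care is the bookkeeping between the \(\pi i\,\tr\) normalization in the theta series and the \(e^{2\pi i\tr(TZ)}\) and \((q')^m\) conventions. In particular, one must confirm that the \(q'\)-exponent is \((x_2,x_2)/2\) and not \((x_2,x_2)\), since that is exactly what produces the threshold \(m\le t\) rather than \(m\le 2t+1\).
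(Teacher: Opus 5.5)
Your proposal is correct and follows the paper's proof: you identify the index-\(m\) Fourier--Jacobi coefficient with pairs \((x_1,x_2)\) where \((x_2,x_2)=2m\), use \(\min(L)=2t+2\) to rule out such nonzero \(x_2\) for \(1\le m\le t\), and finish by linearity. Writing \(\phi_m\) out explicitly as a lattice sum and checking the \(\pi i\,\mathrm{tr}\) normalization is a more detailed version of the paper's Fourier-coefficient argument, not a different route.
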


\begin{proof}
The Fourier coefficients of \(\Theta_L^{(2)}\) are indexed by half-integral symmetric matrices
\[
T =
\begin{pmatrix}
n & r/2\\
r/2 & m
\end{pmatrix}
\ge 0
\]
and count pairs \((x,y) \in L^2\) with Gram matrix \(2T\). In particular, for a Fourier coefficient occurring in the index-\(m\) Fourier--Jacobi coefficient, the second lattice vector \(y\) necessarily satisfies \((y,y) = 2m\); for \(m > 0\), such a vector \(y\) is necessarily nonzero.

Since \(L\) is extremal of rank \(24t\), we have
\[
\min(L) = 2t+2.
\]
If \(1 \le m \le t\), then
\[
2m \le 2t < 2t+2 = \min(L),
\]
so no such vector \(y\) can exist. Hence every Fourier coefficient of Jacobi index \(m\) vanishes for \(1 \le m \le t\), which is exactly the claimed statement that
\[
\phi_m\bigl(\Theta_L^{(2)}\bigr) = 0
\qquad (1 \le m \le t).
\]
The final assertion follows by subtraction.
\end{proof}

With Lemma~\ref{lem:fj-vanish} in hand, we obtain a sharp characterization of \(\Theta_L^{(2)}-\Theta_{L'}^{(2)}\) for any pair \(L,L'\) of extremal even unimodular lattices of rank \(24t\).

\begin{proposition}\label{prop:reduction}
Let \(t \in \{2,3,4,5\}\), and let \(L\) and \(L'\) be extremal even unimodular lattices of rank \(24t\). Then
\[
\Theta_L^{(2)}-\Theta_{L'}^{(2)}
=
\begin{cases}
0& t \in \{2,3,4\},\\[4pt]
c\chi_{10}^6 \text{ for some \(c \in \mathbb C\)}& t = 5.
\end{cases}
\]
\end{proposition}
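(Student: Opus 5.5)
The plan is to show that the difference \(f := \Theta_L^{(2)}-\Theta_{L'}^{(2)}\) satisfies the hypotheses of Proposition~\ref{prop:criterion}, and then read off the conclusion.

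First, both \(\Theta_L^{(2)}\) and \(\Theta_{L'}^{(2)}\) lie in \(M_{12t}(\Sp_4(\mathbb Z))\), since the rank is \(24t\) and so the weight is \(12t\). Hence \(f\) is a degree-\(2\) Siegel modular form of weight \(12t\).

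Next, I check that \(f\) is cuspidal. Its index-\(0\) Fourier--Jacobi coefficient is \(\Phi(f) = \Phi(\Theta_L^{(2)}) - \Phi(\Theta_{L'}^{(2)}) = \theta_L - \theta_{L'}\), the difference of the degree-\(1\) theta series. For an extremal even unimodular lattice of rank \(24t\), the form \(\theta_L \in M_{12t}(\SL_2(\mathbb Z))\) satisfies \(\theta_L = 1 + O(q^{t+1})\), because \(\min(L) = 2t+2\).

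By Remark~\ref{rem:elliptic-basis}, \(M_{12t}(\SL_2(\mathbb Z))\) has the basis \(e_4^{3t-3j}\Delta^j\), \(0\le j\le t\), whose elements have vanishing orders exactly \(0,\dots,t\) at the cusp. A form in this space vanishing to order \(\ge t+1\) at \(i\infty\) must therefore be zero. Since \(\theta_L-\theta_{L'} = O(q^{t+1})\), we get \(\theta_L = \theta_{L'}\). (This is the classical uniqueness of the extremal degree-\(1\) theta series.) Hence \(\Phi(f)=0\), and, as recalled in Section~\ref{sec:fj-prelim}, \(f \in S_{12t}(\Sp_4(\mathbb Z))\).

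Then Lemma~\ref{lem:fj-vanish} gives \(\phi_m(f) = 0\) for \(1 \le m \le t\). So Proposition~\ref{prop:criterion} applies:
\begin{itemize}
\item for \(t\in\{2,3,4\}\) it gives \(f = 0\);
\item for \(t=5\) it gives \(f = c\chi_{10}^6\) for some \(c\in\mathbb C\).
\end{itemize}

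The only step requiring care is the cuspidality, i.e.\ establishing \(\Phi(f)=0\) via the elliptic basis argument. Everything else is a direct invocation of earlier results, so I do not expect a real obstacle.
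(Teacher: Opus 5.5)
Your proposal is correct and matches the paper's proof essentially step for step. Like the paper, you first show \(\Theta_L^{(1)}=\Theta_{L'}^{(1)}\) from the basis \(e_4^{3t-3j}\Delta^j\) of Remark~\ref{rem:elliptic-basis}, which gives \(\Phi(f)=0\) and hence cuspidality, and then apply Lemma~\ref{lem:fj-vanish} and Proposition~\ref{prop:criterion}.
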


\begin{proof}
First we consider the ordinary (i.e., degree-\(1\)) theta series. Since \(L\) is extremal, \(\min(L) = 2t+2\), so
\[
\Theta_L^{(1)}(\tau) = 1+O(q^{t+1}),
\qquad q = e^{2\pi i\tau},
\]
and likewise
\[
\Theta_{L'}^{(1)}(\tau) = 1+O(q^{t+1}).
\]
Thus,
\[
g(\tau) := \Theta_L^{(1)}(\tau)-\Theta_{L'}^{(1)}(\tau)
\]
is a modular form of weight \(12t\) on \(\SL_2(\mathbb Z)\) vanishing to order at least \(t+1\) at \(i\infty\).

On the other hand, as noted in Remark~\ref{rem:elliptic-basis},
\[
M_{12t}(\SL_2(\mathbb Z))
=
\bigoplus_{j=0}^{t}
\mathbb C\,e_4^{\,3t-3j}\Delta^j,
\]
and the basis elements have distinct orders \(0,1,\dots,t\) at \(i\infty\). Thus, every nonzero element of \(M_{12t}(\SL_2(\mathbb Z))\) vanishes at \(i\infty\) to order at most \(t\). We must therefore have \(g = 0\); equivalently,
\[
\Theta_L^{(1)} = \Theta_{L'}^{(1)}.
\]

Now, we set
\[
f := \Theta_L^{(2)}-\Theta_{L'}^{(2)}.
\]
Because \(\Phi(\Theta_L^{(2)}) = \Theta_L^{(1)}\) and \(\Phi(\Theta_{L'}^{(2)}) = \Theta_{L'}^{(1)}\), we have \(\Phi(f) = 0\), so \(f \in S_{12t}(\Sp_4(\mathbb Z))\). By Lemma~\ref{lem:fj-vanish},
\[
\phi_1(f) = \cdots = \phi_t(f) = 0.
\]
Proposition~\ref{prop:criterion} therefore applies and gives the claimed characterization.
\end{proof}

We can now complete the proof of Theorem~\ref{thm:main}. Part~(i) is the case \(t \in \{2,3,4\}\) of Proposition~\ref{prop:reduction}. For part~(ii), Proposition~\ref{prop:reduction} gives
\[
\Theta_L^{(2)}-\Theta_{L'}^{(2)} = c\,\chi_{10}^6
\]
for some \(c \in \mathbb C\); it remains to show \(c \in \mathbb Z\). The Siegel theta series \(\Theta_L^{(2)}\) and \(\Theta_{L'}^{(2)}\) both have integer Fourier coefficients (they are generating functions counting lattice pairs), so the same is true of the Fourier coefficients of their difference. Hence \(c \in \mathbb Z\) will follow once we exhibit a half-integral symmetric \(T\) at which \(a(\chi_{10}^6;T) = 1\); the required \(T\) is supplied by the next lemma.

\begin{lemma}\label{lem:chi106-T66}
Let
\[
T = \begin{pmatrix}6 & 3\\ 3 & 6\end{pmatrix}.
\]
Then \(a(\chi_{10}^6;T) = 1\).
\end{lemma}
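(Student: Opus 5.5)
The plan is to expand \(\chi_{10}^6\) using the Fourier expansion of \(\chi_{10}\) near its lowest terms and show that the coefficient at \(T=\bigl(\begin{smallmatrix}6&3\\3&6\end{smallmatrix}\bigr)\) is a product of six copies of the minimal coefficient. Since Fourier coefficients multiply by convolution,
\[
a(\chi_{10}^6;T)=\sum_{T_1+\cdots+T_6=T} \prod_{i=1}^6 a(\chi_{10};T_i),
\]
where each \(T_i\) ranges over the support of \(\chi_{10}\). That support consists of positive definite half-integral matrices, since \(\chi_{10}\) is a cusp form. For \(\Sp_4(\mathbb Z)\) this actually holds for cusp forms: the coefficients at rank-one \(T\) vanish, because \(\Phi\) applied to \(F|_k U\) kills them for all \(U\in \mathrm{GL}_2(\mathbb Z)\).

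The key step is to show that the only admissible decomposition is \(T_i=T_0:=\bigl(\begin{smallmatrix}1&1/2\\1/2&1\end{smallmatrix}\bigr)\) for all \(i\). Write \(T_i=\bigl(\begin{smallmatrix}n_i&r_i/2\\r_i/2&m_i\end{smallmatrix}\bigr)\) with \(n_i,m_i\ge1\) integers and \(4n_im_i-r_i^2>0\). Summing the diagonals gives \(\sum n_i=6=\sum m_i\) with each \(n_i,m_i\ge1\), which forces \(n_i=m_i=1\) for every \(i\). Then \(4-r_i^2>0\) forces \(r_i\in\{-1,0,1\}\), and \(\sum r_i=6\) forces \(r_i=1\) for all \(i\).

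There is therefore exactly one term in the convolution sum, and it equals \(a(\chi_{10};T_0)^6=1^6=1\) by the normalization \eqref{eq:chi10-normalization}. This proves the lemma.

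The only point needing care is the claim that \(\chi_{10}\) has no Fourier coefficients at singular \(T\), including those with \(n_i\) or \(m_i\) equal to \(0\). I will take this from cuspidality: \(\phi_0(\chi_{10})=0\) kills \(m_i=0\), and the symmetry \(a(F;T)=a(F;{}^tUTU)\) for \(U\in\mathrm{GL}_2(\mathbb Z)\) (weight even) transfers this to \(n_i=0\) and to any rank-one \(T\). Alternatively, one can simply note that the Fourier--Jacobi expansion \(\chi_{10}=\psi_{10}q'+O((q')^2)\) already gives \(m_i\ge1\); combined with the swap symmetry, this gives \(n_i\ge1\), and the argument above uses only these two facts plus positive semidefiniteness, \(r_i^2\le4n_im_i\).

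With \(n_i=m_i=1\), semidefiniteness allows \(|r_i|\le2\). The case \(r_i=\pm2\) is a rank-one matrix, so the definite-support fact is genuinely needed. The obstacle I would flag is handling this case cleanly. If rank-one coefficients were allowed, the decomposition could use some \(r_i=2\) balanced by others, and the coefficient would no longer be a single term. To rule it out I would invoke \(a(\chi_{10};\bigl(\begin{smallmatrix}1&1\\1&1\end{smallmatrix}\bigr))=a(\chi_{10};\bigl(\begin{smallmatrix}1&0\\0&0\end{smallmatrix}\bigr))=0\), using \(U=\bigl(\begin{smallmatrix}1&0\\-1&1\end{smallmatrix}\bigr)\).
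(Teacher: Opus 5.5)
Your proof is correct and is essentially the paper's argument: both show that \(T=6T_0\), with \(T_0=\bigl(\begin{smallmatrix}1&1/2\\1/2&1\end{smallmatrix}\bigr)\), admits only the decomposition into six copies of \(T_0\), so the coefficient is \(a(\chi_{10};T_0)^6=1\). The paper organizes this through \(\phi_6(\chi_{10}^6)=\varphi_{10,1}^6\) and the leading term \(q\zeta^{-1}(\zeta-1)^2\) of \(\varphi_{10,1}\), where the vanishing at \(r=\pm2\) that you handle via \(\mathrm{GL}_2(\mathbb Z)\)-invariance is already built into the Jacobi cusp expansion; your direct convolution needs only the normalization \(a(\chi_{10};T_0)=1\) and the positive-definite support of cusp-form coefficients.
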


\begin{proof}
By Lemma~\ref{lem:depth-fj}, the depth-\(6\) component of the depth decomposition of \(\chi_{10}^6\) is \(\chi_{10}^6\) itself, and
\[
\phi_6(\chi_{10}^6) = \psi_{10}^6.
\]
Under the Fourier--Jacobi correspondence,
\[
q^n\,\zeta^r\,(q')^m
\quad \longleftrightarrow \quad
\begin{pmatrix} n & r/2\\ r/2 & m\end{pmatrix},
\]
so \(a(\chi_{10}^6;T)\) is the coefficient of \(q^6\zeta^6\) in \(\psi_{10}^6\). Under our normalization \eqref{eq:chi10-normalization}, \(\psi_{10} = \varphi_{10,1}\), so this is the coefficient of \(q^6\zeta^6\) in \(\varphi_{10,1}^6\).

We compute this coefficient directly from the leading expansion of \(\varphi_{10,1}\). Recall that
\[
\varphi_{10,1}(\tau,z) = q(\zeta+\zeta^{-1}-2)+O(q^2).
\]
Since \(\varphi_{10,1}\) has initial \(q\)-order \(1\), only the leading \(q\)-term of each factor can contribute to the coefficient of \(q^6\) in \(\varphi_{10,1}^6\).
Factoring, we have
\[
\zeta+\zeta^{-1}-2 = \zeta^{-1}(\zeta-1)^2,
\]
so that
\[
\varphi_{10,1}^6 = q^6\zeta^{-6}(\zeta-1)^{12}+O(q^7).
\]
Expanding
\[
(\zeta-1)^{12} = \sum_{k=0}^{12}\binom{12}{k}(-1)^{12-k}\zeta^k,
\]
we read off
\[
[q^6\zeta^r]\,\varphi_{10,1}^6
=
(-1)^{12-(r+6)}\binom{12}{r+6}
=
(-1)^r\binom{12}{r+6}
\qquad (-6 \le r \le 6).
\]
In particular, the coefficient of \(q^6\zeta^6\) in \(\varphi_{10,1}^6\) is \(\binom{12}{12} = 1\). Therefore \(a(\chi_{10}^6;T) = 1\).
\end{proof}

This completes the proof of Theorem~\ref{thm:main}: comparing Fourier coefficients of \(\Theta_L^{(2)}-\Theta_{L'}^{(2)} = c\,\chi_{10}^6\) at \(T = \bigl(\begin{smallmatrix}6&3\\3&6\end{smallmatrix}\bigr)\) gives
\[
c = a\bigl(\Theta_L^{(2)}-\Theta_{L'}^{(2)};T\bigr) \in \mathbb Z.
\]

\section{Concluding remarks}\label{sec:remarks}

Our analysis provides a unified genus-\(2\) depth argument that treats all cases \(t \in \{2,3,4\}\), i.e., ranks \(48\), \(72\), and \(96\). This should be contrasted with rank~\(40\), where degree-\(2\) theta series are known \textit{not} to be unique among extremal lattices \cite{Ozeki40,Peters}; indeed, Peters~\cite{Peters} showed that the degree-\(2\) theta series does not determine the isometry class there. (The rank-\(24\) case is automatic because the Leech lattice is unique.)

For \(t \ge 5\), the depth argument alone no longer closes: the maximum
possible depth allowed by the weight bound is \(\lfloor 6t/5\rfloor\), which
exceeds \(t\) once \(t \ge 5\). In rank \(120\) (i.e., \(t = 5\)), this is the source of the one-dimensional residual line \(\mathbb C\chi_{10}^6\) in Theorem~\ref{thm:main}(ii). Resolving whether the residual scalar \(c(L,L')\) is in fact always zero---equivalently, whether \(\Theta_L^{(2)}\) is unique among extremal lattices of rank \(120\)---is the natural next question. Several avenues for such input seem plausible.

First, the unconstrained residual at depth \(t+1\) is quite small: for \(t = 5\) (rank \(120\), weight \(60\)), only the depth-\(6\) component survives, and it is a scalar multiple of \(\chi_{10}^6\). It may be possible to rule out this component by analyzing the Fourier--Jacobi coefficient \(\phi_{t+1}\) more carefully---not merely checking whether it vanishes, but exploiting the fact that the genus-\(1\) theta series of an extremal lattice is already determined, which constrains the diagonal restriction of \(\phi_{t+1}\) and hence the available Jacobi forms of index \(t+1\).

Second, one may ask how the residual line \(\mathbb C\chi_{10}^6\) sits relative to the Saito--Kurokawa subspace of \(S_{60}(\Sp_4(\mathbb Z))\). Two distinct questions can be raised here. The first is whether \(\chi_{10}^6\) itself satisfies the Maass relations, i.e., whether \(\chi_{10}^6\) lies in the Saito--Kurokawa subspace; the second, and weaker, question is whether \(\chi_{10}^6\) has a nonzero projection there. The relevant approach is to compute, or prove the vanishing of, the actual Saito--Kurokawa projection of \(\chi_{10}^6\), equivalently its Petersson pairings with a basis of Saito--Kurokawa lifts in \(S_{60}(\Sp_4(\mathbb Z))\). If the projection vanishes, then the residual lies entirely in the non-Saito--Kurokawa part, where we may hope for stronger vanishing or slope bounds.

Third, and more ambitiously, one could pass to higher genus. Salvati Manni's genus-\(3\) result for rank \(72\) \cite{SM} already demonstrates that increasing the degree of the Siegel theta series introduces additional constraints. A systematic study of the interplay between the genus-\(2\) and genus-\(3\) depth filtrations, mediated by the \(\Phi\)-operator, could yield uniqueness statements in ranks where neither genus alone suffices. Ozeki's own work \cite{OzekiSM} on the passage from genus~\(3\) to genus~\(2\) provides a natural starting point for such an investigation.

\providecommand{\bysame}{\leavevmode\hbox to3em{\hrulefill}\thinspace}
\providecommand{\MR}{\relax\ifhmode\unskip\space\fi MR }
\providecommand{\MRhref}[2]{%
  \href{http://www.ams.org/mathscinet-getitem?mr=#1}{#2}
}
\providecommand{\href}[2]{#2}

\end{document}